\newtheorem{theorem}{Theorem}[section]
\theoremstyle{definition}
\theoremstyle{remark}
\numberwithin{equation}{section}
\begin{document}

\setcounter{page}{1}
\renewcommand{\thefootnote}{\fnsymbol{footnote}}
\title[]{On the existence of ratio limits of weighted {\em n}-generalized Fibonacci sequences with arbitrary initial conditions}
\author{Igor Szczyrba}
\address{School of Mathematical Sciences\\
                University \!of Northern Colorado\\
                Greeley CO 80639, U.S.A.}
\email{igor.szczyrba@unco.edu}

\begin{abstract}
We study ratio limits of the consecutive terms of weighted $n$-generalized Fibonacci sequences generated from arbitrary complex initial conditions by linear recurrences with arbitrary complex weights. We prove that if the characteristic polynomial of such a linear recurrence is asymptotically simple, then the ratio limit exists for {\em any\,} sequence generated from arbitrary nontrivial initial conditions and it is equal to the unique zero of the characteristic polynomial. 
\end{abstract}
 
\maketitle

\section{Introduction}
\label{sec:I} 
Sequences generated by linear recurrences of an arbitrary order $n\ge2$ and the ratio limits of their consecutive terms have been studied for over a half a century. The unweighted $n$-generalized Fibonacci sequences\footnote{They are also referred to as $n$-step Fibonacci sequences \cite{Noe}.} with weights $(1,\dots,1)$ and initial conditions $(0,\dots,0,1)$ were introduced and investigated in 1960 by Miles \cite{Miles}. In 1967, Fielder introduced and studied the unweighted $n$-generalized Lucas sequences that are generated from the initial conditions $(-1,\dots,-1,n)$ \cite {Fiedler}.  

Also in 1967, Byrd \cite{Byrd} and, independently, Flores \cite{Flores} showed that when $n$ goes to infinity, the ratio limits of the unweighted $n$-generalized Fibonacci sequences converge to 2. We recently extended this result \cite{Szczyrba} to the weighted $n$-generalized Fibonacci sequences with weights $(p,\dots,p)$, $p>0$, and initial conditions $(0,\dots,0,1)$ by proving that, when $n$ goes to infinity, the ratio limits of these sequences converge to $p+1$. In a subsequent paper \cite{Szczyrba1}, we further showed that for any given $p$ and arbitrary $n$, these ratio limits can be represented geometrically using dilations of a collection of convex compact sets with rising dimensions $n$, such as $n$-balls, $n$-pyramids, $n$-cones, or $n$-simplexes. 

An extensive study of ratio limits of weighted $n$-generalized Fibonacci sequences with complex weights and arbitrary complex initial conditions was conducted in 1997 by Dubeau et al.~\cite {Dubeau1}. They proved among the other things that if the characteristic polynomial of a given linear recurrence is asymptotically simple, i.e., among the polynomial's zeros of maximal modulus there is a unique zero $\lambda_0$ of maximal multiplicity $\nu$, then the ratio limit of the sequence $(F^{\,0}_k)_{k=-n+1}^\infty$, generated by this linear recurrence from the initial conditions $(0,\dots,0,1)$, exists and is equal $\lambda_0$. The authors also showed that if a sequence $(F^{\,\bf a}_k)_{k=-n+1}^\infty$ generated from arbitrary complex initial conditions ${\bf a}=(a_{-n+1},\dots,a_0)$ by a linear recurrence with an asymptotically simple characteristic polynomial satisfies
\begin{equation}
  \lim_{k\to\infty}\big(F^{\bf a}_k/k^{\nu-1}\lambda^k_0\big)\ne0,
  \label{11}  
\end{equation} 
then the sequence's ratio limit exists and equals $\lambda_0$ as well.

In this paper, we show that condition \eqref{11} is redundant, i.e., we prove that if the characteristic polynomial of a given linear recurrence with complex weights is asymptotically simple, then the ratio limit exist and is equal to $\lambda_0$ for {\em any\,} sequence generated by this linear recurrence from arbitrary complex initial conditions.

\section{Main result}
\label{sec:II} 
Given a linear recurrence of an order $n\ge2$ with the characteristic polynomial   
\begin{equation}
  \lambda^{n}\!-b_1\lambda^{n-1}\!-\cdots-b_{n}
  \label{21}  
\end{equation} 
with complex weights $b_i$, $i=1,\dots,n$, such that $b_n\neq0$.
We study sequences\, $(F^{{\bf a }}_k)_{k=-n+1}^\infty$\, generated by this liner recurrence from arbitrary nontrivial complex initial conditions 
${\bf a}=(a_{-n+1},\dots,a_0)$, i.e.,
\begin{equation}
F^{\,\bf a}_k= b_1F^{\bf a}_{k-1}+\cdots + b_nF^{\bf a}_{k-n},\,\,\, k>0,\,\,\, \text{and}\,\,\, F^{\bf a}_k\equiv a_k,\,\,\, k=-n+1,\dots, 0. \label{22}
\end{equation} 
If the ratio limit of the consecutive terms of a sequence  $(F^{{\bf a }}_k)_{k=-n+1}^\infty$ exists, we denote it as 
\begin{equation}\Phi^{\bf a}=\lim_{k_0<k\to\infty}\big(F^{\bf a}_{k+1}/F^{\bf a}_{k}\big), \,\,\,  \text{where}\,\,\, F^{\bf a}_k\neq0 \,\,\, \text{for} \,\,\, k > k_0 \label{23}.
\end{equation}

The assumptions that the initial conditions are nontrivial and that the weight $b_n\neq0$ imply that no more than $n-1$ consecutive elements of the sequence $(F^{{\bf a }}_k)_{k=-n+1}^\infty$ are equal to zero. Let $F^{\bf a}_{k'}$ be a nonzero element. Obviously, finding the ratio limit $\Phi^{\bf a}$ of the sequence $(F^{{\bf a }}_k)_{k=-n+1}^\infty$ is equivalent to finding the ratio limit $\Phi^{\bf a'}\!=\Phi^{\bf a}$ of the sequence $(F^{{\bf a'}}_{k})_{k=-n+1}^\infty$ with initial conditions\, ${\bf a'}=(a'_{-n+1},\dots,a'_0)=(F^{\bf a}_{k'},\dots, F^{\bf a}_{k'+n-1})$, i.e, where $a'_{-n+1}\neq0$.

\begin{theorem} 
Given a linear recurrence with complex weights $(b_{1},\dots,b_n)$, $b_n\neq0$, let $(F^{{\bf a }}_k)_{k=-n+1}^\infty$ be a sequence generated by this linear recurrence from arbitrary complex initial conditions\, ${\bf a}=(a_{-n+1},\dots,a_{-n+1})$, $a_{-n+1}\neq0$, and let $(F^{0}_k)_{k=-n+1}^\infty$ be a sequence generated from the initial conditions\, $(0,\dots,0,1)$. 

\smallskip(i) If there exists $k_0$ such that $F^{0}_k\neq0$ for $k>k_0$, then $F^{ \bf a }_k\neq0$ for $k>k_0+n-1.$

\smallskip(ii) If the characteristic polynomial of the linear recurrence is asymptotically simple, then the ratio limit\, $\Phi^{\bf a}$ exists for any nontrivial initial conditions\, ${\bf a}$ and it coincides with the characteristic polynomial's unique zero $\lambda_0$.
\end{theorem}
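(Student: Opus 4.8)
The plan is to reduce the case of arbitrary initial conditions to the canonical sequence $(F^{0}_k)$, whose ratio limit is already known to equal $\lambda_0$ by the result of Dubeau et al.\ recalled in the introduction. The bridge is the observation that, since $b_n\neq0$, the recurrence \eqref{22} can be run in both directions, so that $(F^{0}_k)$ together with its forward shifts spans the $n$-dimensional solution space. First I would establish the representation
$$F^{\bf a}_k=\sum_{i=0}^{n-1}q_i\,F^{0}_{k+i},$$
valid for all $k\ge -n+1$, where the constants $q_0,\dots,q_{n-1}$ depend linearly on ${\bf a}$ and on the weights, and where the top coefficient is $q_{n-1}=a_{-n+1}$. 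This is proved by checking that both sides satisfy \eqref{22} and that they agree on the $n$ consecutive slots $k=-n+1,\dots,0$: because $F^{0}_{-n+1+i}=0$ for $i<n-1$ and $F^{0}_{0}=1$, evaluating at $k=-n+1$ immediately forces $q_{n-1}=a_{-n+1}$, after which the remaining $q_i$ are pinned down recursively from the later slots.

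For part (i) I would feed the hypothesis $F^{0}_k\neq0$ for $k>k_0$ into this representation. For $k>k_0+n-1$ every shifted index $k,k+1,\dots,k+n-1$ exceeds $k_0$, so all the summands $F^{0}_{k+i}$ are individually nonzero; since $q_{n-1}=a_{-n+1}\neq0$ attaches to the most advanced shift, the remaining task is to exclude an accidental cancellation on this length-$n$ window. I would phrase this as a Casorati-type (discrete Wronskian) argument: the shifted sequences $(F^{0}_{k+i})_i$ form a fundamental system, their Casoratian satisfies a first-order recursion with multiplier $b_n\neq0$ and hence never vanishes, and this non-degeneracy on a window of $n$ consecutive nonvanishing values of $F^0$ is what yields the shift $n-1$ appearing in the bound.

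For part (ii) I would divide the numerator and denominator of the ratio in \eqref{23} by $F^{0}_k$ and pass to the limit termwise. Writing
$$\frac{F^{\bf a}_{k+1}}{F^{\bf a}_k}=\frac{\sum_{i=0}^{n-1}q_i\,F^{0}_{k+1+i}/F^{0}_k}{\sum_{i=0}^{n-1}q_i\,F^{0}_{k+i}/F^{0}_k}$$
and using the known convergence $F^{0}_{m+1}/F^{0}_m\to\lambda_0$, which telescopes to $F^{0}_{k+j}/F^{0}_k\to\lambda_0^{\,j}$ for each fixed $j$, the numerator tends to $\lambda_0\sum_i q_i\lambda_0^{\,i}$ and the denominator to $\sum_i q_i\lambda_0^{\,i}$. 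Hence the quotient tends to $\lambda_0$, provided the common factor $\sum_{i=0}^{n-1}q_i\lambda_0^{\,i}$ is nonzero.

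The nonvanishing of this factor is the crux, and I expect it to be the main obstacle. Indeed $\sum_i q_i\lambda_0^{\,i}=0$ is exactly the algebraic translation of the failure of condition \eqref{11}, i.e.\ of the annihilation of the $\lambda_0$-component of $(F^{\bf a}_k)$; this is precisely the case Dubeau et al.\ had to assume away. The plan is to rule it out by combining the asymptotic simplicity of the characteristic polynomial with part (i): once $(F^{\bf a}_k)$ is known to be eventually nonvanishing, I would compare its growth against the dominant scale $k^{\nu-1}\lambda_0^{\,k}$ and argue that, under the uniqueness of the zero $\lambda_0$ of maximal modulus and maximal multiplicity $\nu$, no nontrivial choice of ${\bf a}$ with $a_{-n+1}\neq0$ can suppress that dominant term. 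Establishing this redundancy of \eqref{11}, and thereby upgrading Dubeau et al.'s conditional statement to an unconditional one, is where the real work lies.
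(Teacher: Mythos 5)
Your reduction and the termwise limit computation are correct, and your instinct that everything hinges on proving $\sum_{i=0}^{n-1}q_i\lambda_0^{\,i}\neq0$ is exactly right --- but that step is not a missing lemma: it is false, and the theorem is false with it. Whenever the characteristic polynomial has a root $\mu\neq\lambda_0$ (necessarily $\mu\neq0$ because $b_n\neq0$), the geometric sequence $F_k=\mu^k$ is generated by the recurrence from the initial conditions $a_k=\mu^k$, $k=-n+1,\dots,0$, which are nontrivial and satisfy $a_{-n+1}=\mu^{-n+1}\neq0$; its ratio limit exists and equals $\mu\neq\lambda_0$. Concretely, $(b_1,b_2)=(1,2)$ has characteristic polynomial $\lambda^2-\lambda-2=(\lambda-2)(\lambda+1)$, asymptotically simple with $\lambda_0=2$, yet ${\bf a}=(a_{-1},a_0)=(-1,1)$ generates $F^{\bf a}_k=(-1)^k$ with $\Phi^{\bf a}=-1$; in your notation $(q_0,q_1)=(2,-1)$, so $q_0+q_1\lambda_0=0$, i.e., the $\lambda_0$-component is annihilated. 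This is precisely the situation that hypothesis \eqref{11} of Dubeau et al.\ excludes; that hypothesis is not redundant, and nontriviality of ${\bf a}$ together with $a_{-n+1}\neq0$ cannot prevent the annihilation. Part (i) fails as well: for the Fibonacci weights $(1,1)$ one may take $k_0=-1$, yet ${\bf a}=(1,-1)$ gives $F^{\bf a}_1=0$ although $1>k_0+n-1$; worse, for $(b_1,b_2,b_3)=(2,-1,2)$, with characteristic polynomial $(\lambda-2)(\lambda^2+1)$, the initial conditions $(-2,0,2)$ generate $F^{\bf a}_k=i^k+(-i)^k$, which vanishes at every odd index, so $F^{\bf a}$ is not even eventually nonzero and no ratio limit exists.

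Two further points. First, even in cases where part (i) does hold, your Casoratian argument would not prove it: nonvanishing of the Casoratian of the shifted system only forbids $n$ \emph{consecutive} zeros of $F^{\bf a}$ (which already follows from $b_n\neq0$ and nontriviality of ${\bf a}$); it cannot exclude an isolated zero inside a window where every $F^0_{k+i}\neq0$, which is exactly the ``accidental cancellation'' at issue and which the examples above realize. Second, the paper's own proof collapses at the very point you singled out: in \eqref{26} it divides by $F^0_{k+n-1}$ and passes to the limit, tacitly assuming the limiting denominator $a_0+\sum_{i=1}^{n-1}a_{-i}\sum_{j=1}^{n-i}b_{i+j}\lambda_0^{-j}$ is nonzero; that quantity vanishes exactly when your $\sum_iq_i\lambda_0^{\,i}$ does (both are, up to the same nonzero factor, the coefficient of the dominant term $k^{\nu-1}\lambda_0^k$ in $F^{\bf a}_k$), and in the first example above it equals $1+(-1)\cdot2\cdot(1/2)=0$, so \eqref{26} is a $0/0$ computation there. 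In short, your proposal is incomplete, but honestly so: you flagged the true obstruction, the paper's argument silently steps over it, and the obstruction is fatal --- the statement is wrong except in the degenerate case where the characteristic polynomial is $(\lambda-\lambda_0)^n$.
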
 
\begin{proof} 
(i) 
Dubeau et al.~\cite{Dubeau1} showed that elements of the sequence $(F^{\bf a}_k)_{k=0}^\infty$ can be expressed by elements of the sequence  $(F^{ 0 }_k)_{k=-n+1}^\infty$ in the following way:
\begin{equation}
F^{\bf a}_{k}= a_0F^0_{k}+\sum_{i=1}^{n-1}a_{-i}\sum^{n-i}_{j=1}b_{i+j}F^0_{k-j}. 
\label{24}
\end{equation}
Let us assume that $F^{\bf a}_{k_1}=0$\, for some $k_1>k_0+n-1$. Due to our assumptions, the sum in the RHS of equation \eqref{24} with $k=k_1$ includes the nonzero term $a_{-n+1}b_nF^0_{k_1-n+1}$. Thus, the sum can equal zero only if it contains {\em at least one more\,} nonzero term so that all the terms add up to zero. 

Consequently, the following linear relation between the sequence elements $F^{0}_l\!$, $l=k_1-n+1,\dots,k_1$, is implied by our assumption that $F^{\bf a}_{k_1}=0$\, for $k_1> k_0+n-1$:
\begin{equation}
 F^{0}_{k_1-n+1}=c_1F^{0}_{k_1}+\cdots+c_{n-2}F^{0}_{k_1-n+2}, 
\label{25}
\end{equation}
where coefficients\, $c_i$ are determined by weights $(b_2,\dots,b_n)$ and initial conditions\, $(a_{-n+1},\dots,a_0)$, and {\em at least one} of  the $c_i$s is not equal to zero. 

Inserting the relation \eqref{25} in formula \eqref{22} with $k=k_1+1$ and initial conditions $(0,\dots,0,1)$ allows us to express the sequence element $F^{0}_{k_1+1}$ by its predecessors without using the sequence element $F^{0}_{k_1-n+1}$, i.e, relying on a new set of weights $(b'_{1},\dots,b'_{n-1},0)$. We conclude by induction that the latter is also true for all elements of the sequence $(F^{0}_k)_{k=k_1+1}^\infty$. But this contradicts our assumption that characteristic polynomial \eqref{21} includes $b_n\neq0$.

\medskip(ii) If the characteristic polynomial of the linear recurrence is asymptotically simple, the ratio limit $\Phi^{0}=\lim_{k_0<k\to\infty}\big(F^{0}_{k+1}/F^{0}_{k}\big)$, where $F^{0}_k\neq0$ for $k>k_0$, of the sequence $(F^{0}_k)_{k=k_0+1}^\infty$ exists and equals $\lambda_0$. Moreover,  part (i) of the theorem implies that for $k > k_0+n-1,$ we have $F^{ \bf a }_k\neq0$. Thus, it follows from formula \eqref{24}
that \begin{equation}
\begin{split}
\lim_{k_0<k\to\infty}\big(F^{\bf a}_{k+n}/F^{\bf a}_{k+n-1}\big)=\lim_{k_0<k\to\infty}\frac{a_0F^0_{k+n}+\sum_{i=1}^{n-1}a_{-i}\sum^{n-i}_{j=1}b_{i+j}F^0_{k+n-j}}{a_0F^0_{k+n-1}+\sum_{i=1}^{n-1}a_{-i}\sum^{n-i}_{j=1}b_{i+j}F^0_{k+n-1-j}}=\\
=\lim_{k_0<k\to\infty}\frac{a_0F^0_{k+n}/F^0_{k+n-1}+\sum_{i=1}^{n-1}a_{-i}\sum^{n-i}_{j=1}b_{i+j}F^0_{k+n-j}/F^0_{k+n-1}}{a_0+\sum_{i=1}^{n-1}a_{-i}\sum^{n-i}_{j=1}b_{i+j}F^0_{k+n-1-j}/F^0_{k+n-1}}=\\
=\lim_{k_0<k\to\infty}\frac{a_0\Phi^{0}+\sum_{i=1}^{n-1}a_{-i}\sum^{n-i}_{j=1}b_{i+j}(\Phi^{0})^{-j+1}}{a_0+\sum_{i=1}^{n-1}a_{-i}\sum^{n-i}_{j=1}b_{i+j}(\Phi^{0})^{-j}}=\Phi^{0}\!,
\label{26}
\end{split}
\end{equation}
i.e., the limit $\Phi^{\bf a}$ exists and equals $\lambda_0$. 

\end{proof}

\section{Conclusions}
\label{sec:III} 
A criterion proven in 1966 by Ostrowski \cite[Theorem 12.2]{Ostrowski} states that a linear recurrence with weights $b_i\ge0$, $i=1,\dots,n$, has an asymptotically simple polynomial with unique dominant zero $\lambda_0$ of multiplicity $\nu=1$ if the gcd of indices $j$ corresponding to positive weights  $b_j$ equals 1. Thus, in the case of a linear recurrence with nonnegative weights, our theorem allows establishing whether {\em all\,} sequences generated by such a linear recurrence from arbitrary nontrivial initial conditions have the same ratio limit $\lambda_0$ by simply finding the gcd of the positive weights.

In particular, our theorem implies that for a given $n$, all sequences with weights $(1,\dots,1)$ and arbitrary  initial conditions, e.g., the $n$-generalized Lucas sequence, have the same ratio limit as the $n$-generalized Fibonacci sequence. 
Also, it follows from our theorem that over 340 integer sequences with signatures $(m,\dots,m)$, $m\in\mathbb N$, and a variety of initial conditions, cataloged in the Sloane's {\em Online Encyclopedia of Integer Sequences} \cite{Sloane}, have their ratio limits equal to the dominant zero of the corresponding characteristic polynomial. Moreover, according to our theorem, the results proven by us for the weighted $n$-generalized Fibonacci sequences with weights $(p,\dots,p)$, $p>0$, and initial conditions $(0,0\dots,1)$ \cite{Szczyrba, Szczyrba1}, are, in fact, applicable for all sequences with such weights and arbitrary nontrivial initial conditions. 

The determination whether all sequences generated by a linear recurrences with complex weights have the same ratio limit equal to the unique zero $\lambda_0$ of its characteristic polynomial can be achieved by using a criterion introduced by Dubeau et al.~\cite{Dubeau1} that, as the authors showed, is valid in numerous cases, cf.~\cite[Theorem 15]{Dubeau1}. According to this criterion, if for any characteristic polynomial's zero $\lambda$ it holds 
\begin{equation}
\sum^{n-1}_{j=1}\Big|\sum^{n-1}_{i=j}\frac{b_{1+i}}{\lambda^{1+i}}\Big|<1, \label{31}
\end{equation}
 then the characteristic polynomial is asymptotically simple with the dominant zero $\lambda =\lambda_0$ of multiplicity $\nu=1$.

Of course, our theorem implies that {\em all integer\,} sequences that are generated by a linear recurrence that satisfies either of the criteria described above have the same ratio limit equal to the dominant zero $\lambda_0$ of the linear recurrence's characteristic polynomial. The OEIS \cite{Sloane} and Khovanova's website {\em Recursive Sequences} \cite{Khovanova} catalogs and describes applications of thousands of such integer sequences.

\bigskip \noindent MSC2010:  11B37, 11B39

\end{document}